\numberwithin{equation}{section}
\theoremstyle{plain}
\newtheorem{theorem}{Theorem}
\newtheorem{lemma}{Lemma}
\newtheorem{corollary}{Corollary}
\newtheorem{proposition}{Proposition}
\theoremstyle{definition}
\theoremstyle{remark}
\newtheorem{remark}{Remark}
\newtheorem{notation}{Notation}
\newcommand{\mysection}[1]{\section{#1}
\setcounter{equation}{0}}
\newcommand{\De}{\Delta}
\newcommand{\Gam}{\Gamma}
\newcommand{\ep}{\varepsilon}
\newcommand{\Om}{\Omega}
\newcommand{\om}{\omega}
\newcommand{\si}{\sigma}
\newcommand{\bB}{\mathbb B}
\newcommand{\bE}{\mathbb E}
\newcommand{\bH}{\mathbb H}
\newcommand{\bL}{\mathbb L}
\newcommand{\bN}{\mathbb N}
\newcommand{\bP}{\mathbb P}
\newcommand{\bR}{\mathbb R}
\newcommand{\bZ}{\mathbb Z}
\newcommand{\cB}{\mathcal{B}}
\newcommand{\cF}{\mathcal{F}}
\newcommand{\cP}{\mathcal{P}}
\newcommand{\cS}{\mathcal{S}}
\newcommand{\ip}[1]{\left\langle#1\right\rangle}
\providecommand{\set}[1]{\{#1\}}
\providecommand{\Set}[1]{\left\{#1\right\}}
\providecommand{\Abs}[1]{\left\lvert#1\right\rvert}
\providecommand{\Bigabs}[1]{\Bigl\lvert#1\Bigr\rvert}
\providecommand{\norm}[1]{\lVert#1\rVert}
\providecommand{\Norm}[1]{\left\lVert#1\right\rVert}
\newcommand{\supp}{\operatorname{supp}}
\newcommand{\Rd}{{\bR^d}}
\newcommand{\RT}{\mathbb{R}_T^d}
\newcommand{\intRd}{\int_{\bR^d}}
\newcommand{\intT}{\int_0^T }
\newcommand{\sjoi}{\sum_{j=1}^\infty}
\newcommand{\sjz}{\sum_{j=-\infty}^\infty}
\newcommand{\f}{\frac}
\newcommand{\p}{\partial}
\begin{document}
\title{Stochastic heat equations driven by L\'evy processes}

\author[T. Chang]{TongKeun Chang}
\address{T. Chang: Department of Mathematics, Yonsei University, Seoul 120-749, Republic of Korea}
\email{chang7357@yonsei.ac.kr}

\author[M. Yang]{Minsuk Yang}
\address{M. Yang: Department of Mathematics, Yonsei University, Seoul 120-749, Republic of Korea}
\email{kusnim@gmail.com}

\thanks{The first author was supported  by the National Research Foundation of
Korea(NRF-2010-0016699)}


\begin{abstract}
We study stochastic heat equations driven by a class of L\'evy
processes:
\begin{equation*}
du = \De u dt + g dX_t \quad \mbox{in} \quad     \bR^d_T, \qquad u(0,x)= 0
\quad \mbox{in} \quad  x \in \bR^d.
\end{equation*}
We prove the corresponding estimate
\[\norm{u}_{\bH_p^k(\RT)} \le c(p,T) \norm{g}_{\bB_p^{k-\frac2p}(\RT)}\]
for $2\le p<\infty$ and $k \in \bR$.

\vspace{5mm}

\noindent
2000  {\em Mathematics Subject Classification.}  Primary; 60H15, Secondary; 35R60. \\
\\
\noindent
{\it Keywords and phrases:
Stochastic heat equation, L\'evy process, Sobolev space,  Besov space.}
\end{abstract}

\maketitle

\mysection{Introduction}                          \label{Section1}
In this paper, we study the following stochastic heat equation
\begin{equation}\label{spde1}
\begin{cases}
du  = (\De u  + f )dt + g  dX_t & \RT \vspace{2mm}\\
u|_{t=0} = u_0 & \Rd,
\end{cases}
\end{equation}
where $\bR^d_T : = (0,T) \times \Rd$ for $0 < T < \infty$.
We assume that $X_t$ is a one-dimensional L\'evy process satisfying some conditions on a probability space, which is explained in Section \ref{Section2}.
We allow $f$ and $g$ to be random.
To solve the problem \eqref{spde1}, we may consider the following two problems
\begin{equation}\label{spde2}
\begin{cases}
du  = (\De u  + f )dt & \RT \vspace{2mm}\\
u|_{t=0} = u_0 & \Rd
\end{cases}
\end{equation}
and
\begin{equation}\label{spde3}
\begin{cases}
du  = \De u  dt + g  dX_t &  \bR^d_T \vspace{2mm}\\
u|_{t=0} = 0 & \Rd.
\end{cases}
\end{equation}
Since the problem (\ref{spde2}) has been well studied, we shall focus on the problem (\ref{spde3}).
For $g \in L^p(0,T; {\mathcal S}^{'} (\Rd) )$ (where ${\mathcal S}^{'} (\Rd)$
 is the space of tempered distributions), the solution of (\ref{spde3})
can be represented by
\begin{align}\label{solution}
u(t,x) = \int_0^t T_{t-s} g(s,x) dX_s.
\end{align}
Here, $T_{t-s} g(s,x) =\Gamma(t-s,\cdot) *g(s,\cdot)(x) $, where $\Gamma(t,x) = (2\pi t)^{-\f{d}{2}} e^{-\frac{|x|^2}{4t}}$ and
$*$ is the standard convolution in $\Rd$.

For the Brownian motion case, a theory was developed by N.V. Krylov \cite{Kr94}.
Since the Burkholder-Davis-Gundy inequality implies
\[\bE \int_0^T\norm{ \nabla u(s, \cdot)}_{L^p(\bR^d)}^p ds
\leq c(p) \bE \intT \intRd \Big(\int_0^t
|\nabla T_{t-s}g(s,x)|^2 ds\Big)^{p/2} dxdt,\]
he showed that for $2\le p<\infty$ there is a positive constant $c(p)$ independent of $T$ such that
\begin{align}\label{Krylov}
\bE \intT \intRd \Big(\int_0^t  |\nabla T_{t-s}g(s,x)|^2 ds\Big)^{p/2} dxdt
\leq c(p) \bE  \int_0^T\|g(s,\cdot)\|^p_{L^p(\bR^d)}ds.
\end{align}
He proved this inequality by interpolating $L^2$ estimates via
Plancherel's theorem and sophisticated BMO estimates. Using the
properties of Sobolev spaces, it was generalized for $k\in \bR$
\begin{align}\label{0521-1}
\bE \int_0^T \| u(s, \cdot)\|^p_{ H^k_p(\bR^d)} ds
\leq c(p,T) \bE \int_0^T\| g(s, \cdot)\|^p_{H_p^{k-1}(\bR^d)} ds.
\end{align}
Here, the function space $H^{k}_p(\bR^d)$ is the usual Sobolev space
(see Section \ref{Section2}).

For the general L\'evy process case, a few results are known for these types of Sobolev estimates.
In this case, instead of the Burkholder-Davis-Gundy inequality Kunita's inequality is applicable and it produces
\begin{align}\label{0521-2}
\bE \int_0^T \norm{\nabla u(s, \cdot)}_{L^p(\bR^d)}^p ds \leq
& c\bE \intT \intRd \Big(\int_0^t |\nabla T_{t-s}g(s,x)|^2 ds\Big)^{p/2} dxdt \\
\notag&\quad+ c\bE \intT \intRd \int_0^t |\nabla T_{t-s}g(s,x)|^p dsdxdt.
\end{align}
The first term on the right-hand side is the same as in \eqref{Krylov},
but the second term on the right-hand side is new.
Recently, Z. Chen and K. Kim \cite{CK}  proved that for
$2 \le p<\infty$ and $\epsilon >0$, there is a constant
$c(\epsilon,p,T) > 0$ such that
\begin{equation}\label{ep}
 \bE \intT \intRd \int_0^t  |\nabla T_{t-s}g(s,x)|^p dsdxdt \leq
c(\epsilon,p,T)  \bE \intT \| g(s,\cdot)\|^p_{  H_p^{1 -\frac2p +\ep}(\bR^d)} ds
\end{equation}
under some assumptions on L\'evy measure.

Now we state our main results.

\begin{proposition}\label{prop}
Let $0<T<\infty$ and $1<p<\infty$.
There are positive constants $c_1(p,T)$ and $c_2(p)$ such that
\begin{equation}\label{no ep1}
\bE \intT \intRd \int_0^t  |T_{t-s}g(s,x)|^p dsdxdt \leq c_1(p,T)
\bE \intT \| g(s,\cdot)\|^p_{  B_p^{ -\frac2p}(\bR^d)} ds
\end{equation}
and
\begin{equation}\label{no ep2}
\bE \intT \intRd \int_0^t |T_{t-s}g(s,x)|^p dsdxdt \leq
c_2(p)  \bE \intT \norm{g(s,\cdot)}^p_{\dot B_p^{ -\frac2p}(\bR^d)} ds.
\end{equation}
\end{proposition}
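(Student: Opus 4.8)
The plan is to prove the deterministic kernel estimate and then integrate in the probabilistic variable, since neither inequality actually involves the randomness in an essential way beyond the outer expectation. Writing $G(t,x):=\int_0^t |T_{t-s}g(s,x)|^p\,ds$, the left-hand side of both inequalities is $\bE\int_0^T\intRd G(t,x)\,dx\,dt$, so by Fubini and Tonelli it suffices to establish, for each fixed sample path, the purely analytic bound
\begin{equation*}
\intT\intRd\int_0^t |T_{t-s}g(s,x)|^p\,ds\,dx\,dt \le c\intT \norm{g(s,\cdot)}^p_{B_p^{-2/p}(\bR^d)}\,ds,
\end{equation*}
with the homogeneous version giving \eqref{no ep2}. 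The key object is the heat semigroup $T_\tau=e^{\tau\De}$ with kernel $\Gamma(\tau,\cdot)$, and the natural strategy is to change the order of integration so that the $t$-integral becomes a bound on $\int_s^T \norm{T_{t-s}g(s,\cdot)}^p_{L^p(\bR^d)}\,dt$ for each fixed $s$. After the substitution $\tau=t-s$ this is $\int_0^{T-s}\norm{T_\tau g(s,\cdot)}^p_{L^p}\,d\tau$, so the whole problem reduces to the single time-slice estimate
\begin{equation*}
\int_0^\infty \norm{T_\tau h}^p_{L^p(\bR^d)}\,d\tau \le c\,\norm{h}^p_{\dot B_p^{-2/p}(\bR^d)}
\end{equation*}
for $h=g(s,\cdot)$, with the inhomogeneous and finite-$T$ versions following by restricting the $\tau$-range and absorbing the low-frequency part.

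The heart of the matter is this last inequality, and I would prove it via a Littlewood--Paley decomposition, which is exactly what makes the Besov exponent $-2/p$ appear naturally. Let $h=\sum_j \De_j h$ be a dyadic decomposition with $\De_j h$ frequency-localized to $|\xi|\sim 2^j$. Because $T_\tau=e^{\tau\De}$ acts as a Fourier multiplier $e^{-\tau|\xi|^2}$, each piece obeys the smoothing/decay bound $\norm{T_\tau \De_j h}_{L^p}\lesssim e^{-c\tau 2^{2j}}\norm{\De_j h}_{L^p}$, which is the standard Bernstein-type heat estimate for functions with spectrum in an annulus. Integrating in $\tau$ kills one factor of $2^{2j}$:
\begin{equation*}
\int_0^\infty \norm{T_\tau \De_j h}^p_{L^p}\,d\tau \lesssim \int_0^\infty e^{-cp\tau 2^{2j}}\,d\tau\,\norm{\De_j h}^p_{L^p} \lesssim 2^{-2j}\norm{\De_j h}^p_{L^p}.
\end{equation*}
This is precisely the weight $2^{-2j}=(2^j)^{-2/p\cdot p}$ that reconstitutes the $\dot B_p^{-2/p}$ norm once one sums over $j$, so the scaling is forced and not accidental.

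The remaining obstacle is recombining the dyadic pieces, since $\norm{T_\tau h}_{L^p}$ is not simply $\sum_j\norm{T_\tau\De_j h}_{L^p}$ for $p>2$. I expect this to be the main technical step: one cannot just sum the block estimates, because $L^p$ for $p\ne 2$ does not split across frequency blocks. The standard remedy is to insert the square-function/Littlewood--Paley characterization of the $L^p$ norm, $\norm{T_\tau h}_{L^p}\sim\bignorm{(\sum_j |T_\tau\De_j h|^2)^{1/2}}_{L^p}$, integrate in $\tau$, and then use Minkowski's inequality in the form that moves the $L^{p/2}$ norm inside the $\tau$-integral (valid since $p\ge 2$), thereby reducing to the block bound established above; this is the only place where the hypothesis $p\ge 2$ in \eqref{no ep2} would really be used, while $1<p<\infty$ suffices for \eqref{no ep1} because there the finite time horizon lets one control the low frequencies by the cheaper inhomogeneous embedding. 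Finally, for the inhomogeneous estimate \eqref{no ep1} I would split $h$ into its low-frequency part ($j\le 0$) and high-frequency part: the high frequencies are handled exactly as above, while the finitely many low-frequency blocks are bounded on $[0,T]$ using $\norm{T_\tau\De_j h}_{L^p}\lesssim\norm{\De_j h}_{L^p}$ and the crude bound $\int_0^T d\tau=T$, which is where the $T$-dependent constant $c_1(p,T)$ enters and the switch from the homogeneous to the inhomogeneous Besov norm is absorbed.
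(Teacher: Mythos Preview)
Your overall reduction is correct and matches the paper: Fubini in $\omega$, swap the $s$- and $t$-integrals, and reduce to the single-slice bound $\int_0^\infty \norm{T_\tau h}_{L^p}^p\,d\tau \lesssim \norm{h}^p_{\dot B_p^{-2/p}}$, which is indeed the heat-semigroup characterization of $\dot B^{-2/p}_{p,p}$. The Bernstein-type estimate $\norm{T_\tau \De_j h}_{L^p}\lesssim e^{-c\tau 2^{2j}}\norm{\De_j h}_{L^p}$ is exactly the paper's Lemma~2, and the block computation $\int_0^\infty \norm{T_\tau\De_j h}_{L^p}^p\,d\tau\lesssim 2^{-2j}\norm{\De_j h}_{L^p}^p$ is correct.

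The gap is in the recombination step. Your proposed route---Littlewood--Paley square function plus Minkowski in $L^{p/2}$---yields
\[
\int_0^\infty \norm{T_\tau h}_{L^p}^p\,d\tau \;\lesssim\; \Bigl(\sum_j \norm{T_\cdot \De_j h}_{L^p_{\tau,x}}^2\Bigr)^{p/2} \;\lesssim\; \Bigl(\sum_j 2^{-4j/p}\norm{\De_j h}_{L^p}^2\Bigr)^{p/2} \;=\; \norm{h}^p_{\dot B^{-2/p}_{p,2}},
\]
not $\norm{h}^p_{\dot B^{-2/p}_{p,p}}$. For $p>2$ one has $\dot B^{-2/p}_{p,2}\hookrightarrow \dot B^{-2/p}_{p,p}$, so the norm you produce is \emph{larger} than the one in the statement, and the inequality you obtain is strictly weaker. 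Your approach also forces the restriction $p\ge 2$, whereas the proposition is stated (and proved in the paper) for all $1<p<\infty$; your remark that $p\ge2$ is ``really used'' in \eqref{no ep2} is an artifact of the square-function route, not of the result.

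The paper avoids this by never invoking the square function. It uses the crude triangle inequality $\norm{T_{t-s}g}_{L^p}\le \sum_j e^{-c2^{2j}(t-s)}\norm{\phi_j*g}_{L^p}$ and then proves a Hardy-type lemma (Lemma~3): with $f_j(\tau)=e^{-c2^{2j}\tau}$ and $g_j(s)=\norm{\phi_j*g(s,\cdot)}_{L^p}$,
\[
\intT\int_0^t\Bigl(\sum_j f_j(t-s)g_j(s)\Bigr)^p ds\,dt \;\lesssim\; \intT\sum_j 2^{-2j}g_j(s)^p\,ds.
\]
The proof splits the sum into $2^{2j}(t-s)\le1$ and $2^{2j}(t-s)>1$, applies H\"older in $j$ with carefully chosen weights, sums the resulting geometric series, and then Fubini in $(s,t,j)$. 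This is the step that upgrades the secondary Besov index from $2$ to $p$ and works for the full range $1<p<\infty$; it is precisely the ``remaining obstacle'' you flagged, and your sketch does not supply it.
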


To prove Proposition \ref{prop}, we shall use the Littlewood-Paley
theory and then prove variants of Hardy's inequality. Using
\eqref{0521-1}, \eqref{0521-2}, Proposition \ref{prop} and the
mapping properties of the pseudo-differential operators
$(I-\De)^{s/2}$ and $(-\De)^{s/2}$ (see (1) and (2) of  Remark
\ref{rem}) , we can obtain our main theorem.

\begin{theorem}\label{thm}
Let $0< T < \infty$ and $2\le p<\infty$ .
If $\beta_2<\infty$ and $\beta_p < \infty$, then
there are positive constants $c_1(p,T)$ and $c_2(p)$  such that
\begin{align*}
\norm{u}_{\bH_p^k(\RT)} &\le  c_1(p,T) \norm{g}_{\bB_p^{k-\frac2p}(\RT)}, \\
\norm{u}_{\dot \bH_p^k(\RT)} &\le  c_2(p) \norm{g}_{\dot \bB_p^{k-\frac2p}(\RT)},
\end{align*}
where $\beta_p$ is defined in \eqref{beta} and stochastic Banach spaces
$\bH_p^k(\RT), \,\, \bB_p^{k }(\RT), \,\, \dot \bH_p^k(\RT)$ and $ \dot \bB_p^{k }(\RT)$
are defined in \eqref{s-f spaces}.
\end{theorem}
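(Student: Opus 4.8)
The plan is to derive the whole family of estimates from the two ingredients already in hand --- the Brownian bound \eqref{0521-1} and the new Proposition \ref{prop} --- by lifting the equation with the Bessel potential $(I-\De)^{k/2}$ and then splitting the stochastic convolution through Kunita's inequality. First I would reduce to the level $k=0$. Since $(I-\De)^{k/2}$ is a spatial Fourier multiplier, it commutes with $\De$ and with the heat semigroup $T_{t-s}$, so by a stochastic Fubini argument the function $w:=(I-\De)^{k/2}u$ is again a stochastic convolution, $w(t,x)=\int_0^t T_{t-s}h(s,x)\,dX_s$ with $h:=(I-\De)^{k/2}g$. By the definition of the spaces in \eqref{s-f spaces} one has $\norm{u}_{\bH_p^k(\RT)}^p=\bE\intT\norm{w(t,\cdot)}_{L^p(\Rd)}^p\,dt$, so it suffices to bound this quantity in terms of $h$ and then read the result back through the mapping properties of $(I-\De)^{k/2}$ recorded in Remark \ref{rem}(1).

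Next I would apply Kunita's inequality to $w$ exactly as in \eqref{0521-2}, now with no gradient, to obtain
\[\bE\intT\norm{w(t,\cdot)}_{L^p(\Rd)}^p\,dt \le c\,\bE\intT\intRd\Big(\int_0^t |T_{t-s}h(s,x)|^2\,ds\Big)^{p/2}dx\,dt + c\,\bE\intT\intRd\int_0^t |T_{t-s}h(s,x)|^p\,ds\,dx\,dt,\]
where the two constants are controlled by $\beta_2$ and $\beta_p$ respectively; this is precisely where the hypotheses $\beta_2<\infty$ and $\beta_p<\infty$ are used. The first term is the square function of the Brownian theory, so writing $T_{t-s}h=(I-\De)^{k/2}T_{t-s}g$ and invoking \eqref{Krylov}--\eqref{0521-1} bounds it by $c(p,T)\,\bE\intT\norm{g(s,\cdot)}_{H_p^{k-1}(\Rd)}^p\,ds$. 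For the second term I would apply Proposition \ref{prop} to $h$ and then Remark \ref{rem}(1), giving
\[c\,\bE\intT\intRd\int_0^t |T_{t-s}h(s,x)|^p\,ds\,dx\,dt \le c\,c_1(p,T)\,\bE\intT\norm{h(s,\cdot)}_{B_p^{-\frac2p}(\Rd)}^p\,ds = c\,c_1(p,T)\,\bE\intT\norm{g(s,\cdot)}_{B_p^{k-\frac2p}(\Rd)}^p\,ds.\]

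It then remains to absorb the Brownian contribution into the Besov norm on the right, that is, to use the Besov--Sobolev embedding $B_p^{k-\frac2p}(\Rd)\hookrightarrow H_p^{k-1}(\Rd)$. The gain in smoothness is $(k-\frac2p)-(k-1)=1-\frac2p\ge0$ for $p\ge2$, so this embedding holds: for $p>2$ it is the strict-smoothness chain $B_p^{k-\frac2p}\hookrightarrow B_p^{k-1,\min(p,2)}\hookrightarrow F_p^{k-1,2}=H_p^{k-1}$, while for $p=2$ it degenerates to the identity $B_2^{k-1}=H_2^{k-1}$. Combining the three bounds yields the inhomogeneous estimate. The homogeneous estimate is proved in the same way, replacing $(I-\De)^{k/2}$ by $(-\De)^{k/2}$ (Remark \ref{rem}(2)), \eqref{no ep1} by its homogeneous version \eqref{no ep2}, and the embedding by $\dot B_p^{k-\frac2p}\hookrightarrow\dot H_p^{k-1}$; here the constants in \eqref{Krylov} and \eqref{no ep2} are independent of $T$, which is what makes $c_2(p)$ independent of $T$.

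The routine parts are the potential-mapping identities and the embedding, and the one place that demands care is the borderline $p=2$, where $1-\frac2p=0$ forces the endpoint embedding and the data and solution spaces coincide at equal smoothness. Conceptually, however, the whole theorem rests on Proposition \ref{prop}: it is what replaces the lossy estimate \eqref{ep} by the sharp $\frac2p$ Besov loss, and it is therefore the genuine obstacle --- once it is established, the argument above is essentially bookkeeping with the two potentials and the Besov--Sobolev embedding.
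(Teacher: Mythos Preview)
Your proposal is correct and follows essentially the same route as the paper: reduce to $k=0$ via $(I-\De)^{k/2}$, split by Kunita's inequality into a $\beta_p$-weighted $L^p$ term and a $\beta_2$-weighted square-function term, handle the former with Proposition~\ref{prop} and the latter with Krylov's Brownian bound \eqref{0521-1}, and absorb the resulting $H_p^{k-1}$ norm into $B_p^{k-\frac2p}$ via the embedding $B_p^{k-\frac2p}\hookrightarrow H_p^{k-1}$ for $p\ge2$. The paper's written proof in Section~\ref{Section6} is terser---it records Kunita's inequality and then simply asserts ``From \eqref{no ep1} and \eqref{Kunita}, we get \eqref{thm0}''---but the introduction makes explicit that \eqref{0521-1} is the other ingredient, so you have reconstructed exactly the intended argument, including the embedding step the paper leaves implicit.
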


A direct consequence of Theorem \ref{thm} is the following corollary
which follows from the fact that ${\mathbb H}^k_p(\bR^d)$ is
continuously embedded in ${\mathbb B}^k_p (\bR^d)$ for $k\in {\bR}$
and $2 \leq p < \infty$ and the property of real interpolation; (see
\cite{BL} Theorem 6.4.4 and Theorem 6.3.1).

\begin{corollary}\label{coro}
For $ 0< T < \infty$ and $2\le p<\infty$
\begin{align*}
\norm{u}_{\bH_p^k(\RT)} &\le c(p,T) \norm{g}_{\bH_p^{k-\frac2p}(\RT)}, \\
\norm{u}_{\bB_p^k(\RT)} &\le  c(p,T) \norm{g}_{\bB_p^{k-\frac2p}(\RT)}, \\
\norm{u}_{\dot \bH_p^k(\RT)} &\le  c(p) \norm{g}_{\dot \bH_p^{k-\frac2p}(\RT)}\\
\norm{u}_{\dot \bB_p^k(\RT)} &\le  c(p) \norm{g}_{\dot
\bB_p^{k-\frac2p}(\RT)}.
\end{align*}
\end{corollary}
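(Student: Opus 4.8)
The plan is to derive all four inequalities directly from the two estimates already supplied by Theorem \ref{thm},
\[
\norm{u}_{\bH_p^k(\RT)} \le c_1(p,T)\,\norm{g}_{\bB_p^{k-\f2p}(\RT)},
\qquad
\norm{u}_{\dot\bH_p^k(\RT)} \le c_2(p)\,\norm{g}_{\dot\bB_p^{k-\f2p}(\RT)},
\]
which hold for every $k\in\bR$. The only additional ingredient is the continuous embedding of the Sobolev-type stochastic space into the Besov-type one,
\[
\bH_p^s(\RT)\hookrightarrow\bB_p^s(\RT),
\qquad
\dot\bH_p^s(\RT)\hookrightarrow\dot\bB_p^s(\RT)
\qquad(s\in\bR,\ 2\le p<\infty).
\]
Each of the four lines of the corollary then results from inserting this embedding on one side of a Theorem \ref{thm} estimate: on the target side to weaken a Sobolev output to a Besov output, or on the source side to dominate a Besov input by a Sobolev input.

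First I would reduce the two embeddings above to their purely spatial counterpart. Since the norms of \eqref{s-f spaces} are the $L^p(\Om\times(0,T))$ Bochner norms of the spatial norms $\norm{\cdot}_{H_p^s(\Rd)}$ and $\norm{\cdot}_{B_p^s(\Rd)}$, the stochastic embedding is obtained by raising the pointwise-in-$(\om,t)$ spatial inequality $\norm{v}_{B_p^s(\Rd)}\le c\,\norm{v}_{H_p^s(\Rd)}$ to the power $p$ and integrating $d\bP\,dt$; the homogeneous case is identical. It therefore suffices to record the classical spatial embedding $H_p^s(\Rd)\hookrightarrow B_p^s(\Rd)$, whose sharp range is exactly $2\le p<\infty$. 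This is the point at which real interpolation is used: the references \cite{BL} (Theorems 6.3.1 and 6.4.4) identify both $(H_p^{s_0},H_p^{s_1})_{\theta,p}$ and $(B_p^{s_0},B_p^{s_1})_{\theta,p}$ with $B_p^s$, where $s=(1-\theta)s_0+\theta s_1$, and combining these with the Hilbert-space identity $H_2^s=B_2^s$ yields the embedding for $p\ge2$; in Littlewood--Paley language it is nothing but the inclusion $\ell^2\hookrightarrow\ell^p$ applied block by block.

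With the embedding available the deductions are one line each. For the inhomogeneous Sobolev-to-Sobolev bound I apply the embedding on the source, $\norm{g}_{\bB_p^{k-\f2p}(\RT)}\le c\,\norm{g}_{\bH_p^{k-\f2p}(\RT)}$, and chain with the first estimate of Theorem \ref{thm}; for the inhomogeneous Besov-to-Besov bound I apply it on the target, $\norm{u}_{\bB_p^k(\RT)}\le c\,\norm{u}_{\bH_p^k(\RT)}$, and chain with the same estimate. The two homogeneous bounds follow verbatim from the second estimate of Theorem \ref{thm} together with $\dot\bH_p^s(\RT)\hookrightarrow\dot\bB_p^s(\RT)$.

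I expect no real difficulty in the chaining, which is formal once the embedding is in place; the only step demanding care is the embedding itself. Its validity is tied to the hypothesis $2\le p<\infty$, since for $p<2$ the inclusion reverses to $B_p^s\hookrightarrow H_p^s$ and the argument collapses, so I would state it with this sharp range and note the consistency at the endpoint $p=2$, where $\bH_p^s$ and $\bB_p^s$ coincide.
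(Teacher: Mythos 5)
Your proposal is correct and takes essentially the same route as the paper, which proves the corollary in one line from Theorem \ref{thm} together with the continuous embedding $H_p^k(\Rd)\hookrightarrow B_p^k(\Rd)$ for $k\in\bR$ and $2\le p<\infty$, justified by real interpolation via \cite{BL} (Theorems 6.3.1 and 6.4.4). Your chaining of this embedding on the source or target side of the two estimates of Theorem \ref{thm}, lifted to the stochastic spaces by integrating the pointwise spatial inequality in $(\om,t)$, is exactly the intended deduction.
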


The organization of the paper is as follows.
In Section 2, we introduce precise definitions of function spaces
and conditions concerning L\'evy processes.
In Section 3, we prepare basic lemmas for the heat kernel.
In Section 4, we reduce Proposition \ref{prop} to Lemma \ref{lemma3}.
In Section 5, we prove our main Lemma \ref{lemma3}.
In Section 6, we prove Theorem \ref{thm}.
In Section 7, we apply our method to SPDE with fractional Laplace operator.
\mysection{Preliminaries}                           \label{Section2}
\subsection{Sobolev and Besov spaces}
Let $ \cS:=\cS(\Rd)$ denote the class of Schwartz functions on $\Rd$.
The space $\cS':=\cS'(\Rd)$ is the dual space, i.e., the space
of continuous linear functionals on $\cS $.
Given $f\in\cS $, we define the Fourier transform and the inverse Fourier transform of $f$ by
\[\cF(f)(\xi) = \widehat{f}(\xi) = \intRd e^{-2\pi i \xi \cdot x} f(x) dx,
\quad \cF^{-1}(f)(x) = \intRd e^{2\pi i x\cdot\xi} f(\xi) d\xi.\]
The definition of Fourier transform is naturally extended to a tempered
distribution $f$; (see chapter 9 in \cite{Fo}).
We define the operators
\begin{align*}
(I-\De)^{k/2}f &= \cF^{-1}((1+4\pi^2|\xi|^2)^{k/2}\widehat{f}),\\
(-\De)^{k/2}f &= \cF^{-1}((2\pi|\xi|)^k\widehat{f})
\end{align*}
for $k\in\bR$ and for $f\in\cS(\Rd)$.
Let $k\in\bR$ and $1<p<\infty$.
The (nonhomogeneous) Sobolev space $H_p^k(\Rd)$ is defined as
\[H_p^k(\Rd) = \Set{f\in\cS'~|~\norm{f}_{H_p^k }:=\norm{(I-\De)^{k/2} f}_{L^p } < \infty},\]
and the homogeneous Sobolev space $\dot{H}_p^k(\Rd)$ is defined as
\[\dot{H}_p^k(\Rd) = \Set{f\in\cS'/\cP~|~\norm{f}_{\dot{H}_p^k }:=\norm{(-\De)^{k/2} f}_{L^p } < \infty},\]
where $\cS'/\cP$ denote the set of all tempered distributions modulo polynomials.
Note that to avoid working with equivalence classes of functions we identify two distributions in $\dot{H}_p^k(\Rd)$ whose difference is a polynomial.

Before we give the definition of Besov spaces, we prepare the setup.
We fix a function $\psi\in\cS(\Rd)$ satisfying $\widehat{\psi}(\xi)=1$ for $|\xi|\le1$ and $\widehat{\psi}(\xi)=0$ for $|\xi|\ge2$ and then define
$\widehat{\phi}(\xi)=\widehat{\psi}(\xi)-\widehat{\psi}(2\xi)$.
Note also that
\begin{equation}\label{support}
\supp\widehat{\phi}(\xi) \subset \set{1/2\le|\xi|\le2}.
\end{equation}
We define for $j\in\bZ$
\begin{equation}\label{phij}
\widehat{\phi}_j(\xi) = \widehat{\phi}(2^{-j}\xi)
\end{equation}
so that for all $\xi \in \bR^d$
\begin{equation}\label{partition1}
1=\widehat{\psi}(\xi)+\sjoi \widehat{\phi}_j(\xi)
\end{equation}
and for all $\xi\neq0$
\begin{equation}\label{partition2}
1=\sjz \widehat{\phi}_j(\xi).
\end{equation}
Let $k\in\bR$ and $1 \le p\le\infty$.
The (nonhomogeneous) Besov space $B_p^k(\Rd)$ is defined as
\[B_p^k(\Rd) = \Big\{f\in\cS'~|~\norm{f}_{B_p^k}:=\norm{\psi*f}_{L^p} +
\Big(\sjoi (2^{kj} \norm{\phi_j*f}_{L^p})^p\Big)^{1/p}<\infty\Big\},\]
and the homogeneous Besov space $\dot{B}_p^k(\Rd)$ is defined as
\[\dot{B}_p^k(\Rd) = \Big\{f\in\cS'/\cP~|~\norm{f}_{\dot{B}_p^k}: =
\Big(\sjz (2^{kj} \norm{\phi_j*f}_{L^p})^p\Big)^{1/p}<\infty\Big\},\]
where $*$ denotes the standard convolution in $\Rd$.
We note that whenever $\phi\in\cS$ and $f\in\cS'$, $\phi*f$ is a well defined function.

\begin{remark}\label{rem}
\begin{itemize}
\item[(1)]
For all $k, s\in\bR$, the pseudo-differential operator $(I-\De)^{s/2}$ is
isomorphism from $H_p^k(\Rd)$ to $H_p^{k-s}(\Rd)$ and from
$B_p^k(\Rd)$ to $B_p^{k-s}(\Rd)$.
\item[(2)]
For all $k, s\in\bR$, the pseudo-differential operator $(-\De)^{s/2}$ is
isomorphism from $\dot H_p^k(\Rd)$ to $\dot H_p^{k-s}(\Rd)$ and from
$\dot B_p^k(\Rd)$ to $\dot B_p^{k-s}(\Rd)$.
\item[(3)]
In particular, if  $1<p<\infty$ and $k$ is a nonnegative integer, then
$H_p^k(\Rd)$ is the set of functions satisfying
\[\sum_{0\le|\alpha|\le k} \intRd |\p^\alpha f(x)|^p dx < \infty,\]
where $\alpha = (\alpha_1, \alpha_2, \cdots , \alpha_d) \in (\bN\cup\set{0})^d$
and $\p^\alpha f = \p_{x_1}^{\alpha_1} \p_{x_2}^{\alpha_2} \dots \p_{x_d}^{\alpha_d} f$
is a distributional derivative.
\end{itemize}
\end{remark}
\subsection{Stochastic Banach spaces}
Let $(\Om,\cF,\set{\cF_t},\bP)$ be a probability space, where $\set{\cF_t : t\ge0}$ is a filtration of $\si$-fields $\cF_t \subset \cF$
with $\cF_0$ containing all $\bP$-null subsets of $\Om$.
Assume that a one-dimensional $\set{\cF_t}$-adapted L\'evy processes $X_t$ is defined on $(\Om,\cF,P)$.
We denote the expectation of a random variable $X(\omega)$, $\om\in\Om$
by $\bE[X]$ or simply $\bE X$.
We consider $g$ as a Banach space-valued stochastic process and so $(\Omega\times(0,T),\mathcal{P},P\bigotimes\ell((0,T]))$ is a
suitable choice for their common domain, where $\cP$ is the predictable $\si$-field
generated by $\set{\cF_t : t \geq 0}$ (see, e.g., pp. 84--85 of \cite{K1}) and $\ell((0,T])$ is the Lebesgue measure on $(0,T)$.
We define the stochastic function space
\begin{equation}\label{s-f spaces}
\bH_p^k(\RT)=L^p(\Om\times(0,T),{\mathcal P},H^k_p(\Rd))
\end{equation}
with the norm
\[\norm{f}_{\bH_p^k(\RT)}=\left(\bE\intT \norm{f(s,\cdot)}_{H_p^k(\Rd)}^pds \right)^{1/p}.\]
The stochastic function spaces $\dot\bH_p^k(\RT)$, $\bB_p^k(\RT)$ and $\dot\bB_p^k(\RT)$ are defined similarly.
\subsection{L\'evy process}
A Levy process $X_t:=X(t)$ is a stochastic process satisfying
\begin{itemize}
\item[(L1)]
$X(0)=0$ a.s.,
\item[(L2)]
$ X(t)$ has stationary and independent increments,
\item[(L3)]
$X(t)$ is stochastically continuous, i.e. for all $a>0$ and for all $s\ge0$,
\[\lim_{t\to s} \bP(|X(t)-X(s)|>a)=0.\]
\end{itemize}
A process $X_t$ is c\'adl\'ag if $X_t$ has left limit and is right continuous.
Since every L\'evy process has a c\'adl\'ag modification
that is itself a L\'evy process (see Theorem 2.1.8 in \cite{App}),
we may assume all L\'evy processes $X_t$ are c\'adl\'ag.
Let $t \geq 0$ and Borel sets $A \in \cB(\Rd\setminus\set{0})$.
We denote
\[N(t,A) = \#\set{0\le s \le t : X(s)-X(s-)\in A},\]
the intensity measure $\nu(A)=\bE[N(1,A)]$, and the compensated Poisson random measure
\[\widetilde{N}(t,A) = N(t,A)-t\nu(A).\]
Note that $\nu(A)$ is the L\'evy measure of $X_t$.
By the L\'evy-Ito decomposition (see more details in \cite{App}), there exist a constant $c\in\Rd$ and a positive-definite matrix $A$ such that
\[X_t = ct + AB_t + \int_{|z|<1} z \widetilde{N}(t, dz) + \int_{|z|\ge1} z N(t, dz),\]
where $B_t$ is a $d$-dimensional Brownian motion.
If we denote $\widetilde{c} := c + \nu(\{|z| \geq 1\})$, we may write
\begin{align*}
X_t = \tilde ct + AB_t + \int_{\bR^d} z \widetilde{N}(t, dz).
\end{align*}
Note that $\widetilde{N}(t,z) $ is martingale.
Since the result for the Brownian motion is known, we assume that $\widetilde{c}=0$ and $A=0$ for the simplicity.
Finally, we denote
\begin{equation}\label{beta}
\beta_p = \int_{\bR^d} |z|^p \nu(dz).
\end{equation}
\mysection{Basic Heat Kernel Estimates}                     \label{Section3}
We give basic lemmas for the heat kernel that will be useful in the sequel.

\begin{notation}
We denote $f \lesssim g$ if $f \le cg$ for some   positive constant $c$.
\end{notation}

\begin{lemma}\label{lemma1}
There exists a constant $c>0$ such that for all $j\in\bZ$
\[\Norm{\cF^{-1}(\widehat{\phi}_j(\xi)e^{-t|\xi|^2})}_{L^1} \lesssim e^{-ct2^{2j}},\]
where $\widehat{\phi}_j$ is defined in \eqref{phij}.
The implicit constant depends only on the dimension $d$.
\end{lemma}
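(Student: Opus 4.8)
The plan is to reduce the estimate to a single scaling-invariant inequality and then control the $L^1$ norm through pointwise decay of the kernel. First I would perform the change of variables $\xi = 2^j\eta$ in $\cF^{-1}(\widehat{\phi}_j(\xi)e^{-t|\xi|^2})$. Since $\widehat{\phi}_j(\xi)=\widehat{\phi}(2^{-j}\xi)$, this turns the multiplier into $\widehat{\phi}(\eta)e^{-t2^{2j}|\eta|^2}$, while the accompanying Jacobian $2^{jd}$ together with the induced dilation $x\mapsto 2^jx$ in the inverse transform leaves the $L^1$ norm unchanged. Writing $G^s := \cF^{-1}(\widehat{\phi}(\eta)e^{-s|\eta|^2})$, I obtain
\[
\Norm{\cF^{-1}(\widehat{\phi}_j(\xi)e^{-t|\xi|^2})}_{L^1} = \Norm{G^{t2^{2j}}}_{L^1},
\]
so it suffices to prove $\Norm{G^s}_{L^1}\lesssim e^{-cs}$ with a constant depending only on $d$. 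This scaling step is exactly what forces the final constant to be independent of $j$ and $t$.

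Next I would establish pointwise decay of $G^s$. Setting $m_s(\eta)=\widehat{\phi}(\eta)e^{-s|\eta|^2}$ and integrating by parts, one has the identity $(1+4\pi^2|y|^2)^M G^s(y)=\cF^{-1}((I-\De)^M m_s)(y)$, whence
\[
(1+4\pi^2|y|^2)^M |G^s(y)| \le \Norm{(I-\De)^M m_s}_{L^1}.
\]
The function $m_s$ is smooth and, by \eqref{support}, supported in $\set{1/2\le|\eta|\le2}$, so the same holds for every derivative appearing in $(I-\De)^M m_s$. Each derivative falling on $e^{-s|\eta|^2}$ produces a factor polynomial in $s$ (and in $\eta$, which is bounded on the support) times $e^{-s|\eta|^2}$. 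Since $e^{-s|\eta|^2}\le e^{-s/4}$ on the support, a Leibniz expansion over the finite-measure annulus gives $\Norm{(I-\De)^M m_s}_{L^1}\lesssim_M (1+s)^{2M} e^{-s/4}$.

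Finally I would choose $M$ with $2M>d$ so that $(1+4\pi^2|y|^2)^{-M}$ is integrable over $\Rd$, and integrate the pointwise bound to get $\Norm{G^s}_{L^1}\lesssim_M (1+s)^{2M} e^{-s/4}$. Absorbing the polynomial prefactor into the exponential via $(1+s)^{2M}e^{-s/4}\lesssim e^{-s/8}$ yields $\Norm{G^s}_{L^1}\lesssim e^{-s/8}$, and substituting $s=t2^{2j}$ completes the proof with $c=1/8$. The main obstacle is more bookkeeping than genuine difficulty: one must track the $s$-dependence of the derivatives of the Gaussian and confirm that it grows at most polynomially, so that it is harmlessly swallowed by the Gaussian decay. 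Here the compact support of $\widehat{\phi}$ is decisive, since it keeps the $\eta$-factors bounded and all integrals finite.
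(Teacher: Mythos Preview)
Your proof is correct and follows essentially the same route as the paper: reduce to $j=0$ by scaling, then use the identity $(1+4\pi^2|y|^2)^M G^s(y)=\cF^{-1}\bigl((I-\De)^M m_s\bigr)(y)$ together with the compact support of $\widehat{\phi}$ to obtain enough pointwise decay in $y$ for $L^1$ integrability. The only cosmetic difference is that the paper bounds $(I-\De)^N m_s$ in $L^\infty$ rather than $L^1$ and asserts the bound $\lesssim e^{-ct}$ directly, whereas you explicitly track the polynomial factor $(1+s)^{2M}$ coming from differentiating the Gaussian and then absorb it into the exponential; both arguments are equivalent on the compact annulus.
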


\begin{proof}
Let $K_j(t,x)=\cF^{-1}(\widehat{\phi}_j(\xi)e^{-t|\xi|^2})(x)$.
By a simple scaling
\begin{align*}
K_j(t,x)
&=\intRd e^{2\pi ix\cdot\xi} \widehat{\phi}(2^{-j}\xi) e^{-t|\xi|^2} d\xi \\
&=2^{jd} \intRd e^{ 2\pi i2^jx\cdot\xi} \widehat{\phi}(\xi) e^{-2^{2j}t|\xi|^2} d\xi \\
&=2^{jd} K_0(2^{2j}t,2^jx).
\end{align*}
Observe that
\[(I-\De_\xi)e^{2\pi ix\cdot\xi}=(1+4\pi^2|x|^2)e^{2\pi ix\cdot\xi}.\]
Carrying out the repeated integrations by parts gives
\[(1+ 4\pi^2|x|^2)^N K_0(t,x) = \intRd e^{2\pi ix\cdot\xi} (I-\De_\xi)^N(\widehat{\phi}(\xi) e^{-t|\xi|^2}) d\xi\]
for all $N\in\bN$.
Since $\supp\widehat{\phi}(\xi) \subset \set{1/2\le|\xi|\le2}$, we have
\[(1+ 4\pi^2|x|^2)^N|K_0(t,x)| \lesssim
\sup_\xi|(I-\De_\xi)^N(\widehat{\phi}(\xi) e^{-t|\xi|^2})|.\]
A direct computation shows that for some $c>0$
\[\sup_\xi|(I-\De_\xi)^N(\widehat{\phi}(\xi) e^{-t|\xi|^2})| \lesssim e^{-ct}.\]
We choose $N>d$ so that
\[\intRd |K_j(t,x)| dx = \intRd |K_0(2^{2j}t,x)| dx \lesssim e^{-c2^{2j}t}.\]
This completes the proof.
\end{proof}

Given $g \in H_p^k(\Rd)$, we denote
\[T_t g(x) = \Gam(t,\cdot)*g(x).\]
In fact, for $k<0$, it is the convolution of a function with a tempered distribution, that is,
\[\Gam_t*g(x) =
\begin{cases}
\intRd \Gam(t,x-y)g(y) dy & k\ge0 \vspace{2mm}\\
\ip{g,\Gam(t,x - \cdot)} & k<0,
\end{cases}\]
where $\ip{\cdot,\cdot}$ means the duality paring between $\cS'(\Rd)$ and $\cS(\Rd)$.

\begin{lemma}\label{lemma2}
There exists a constant $c>0$ such that for all $j\in\bZ$
\[\norm{T_t(\phi_j*g)(s,\cdot)}_{L^p} \lesssim e^{-c2^{2j}t} \norm{\phi_j*g(s,\cdot)}_{L^p},\]
where $\phi*g(s,x):=\phi*g(s,\cdot)(x)$.
The implicit constant depends only on the dimension $d$.
\end{lemma}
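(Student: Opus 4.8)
The plan is to use that the operator $T_t$ acts as the Fourier multiplier with symbol $e^{-t|\xi|^2}$ (this is precisely the symbol appearing in Lemma \ref{lemma1}), so that $T_t h = \cF^{-1}(e^{-t|\xi|^2}\,\widehat{h})$, and then to combine Young's convolution inequality with the exponential decay furnished by Lemma \ref{lemma1}. The one subtlety is that Lemma \ref{lemma1} estimates $\cF^{-1}(\widehat{\phi}_j\,e^{-t|\xi|^2})$, i.e.\ the heat multiplier already truncated to the $j$-th frequency annulus, rather than the full kernel $\cF^{-1}(e^{-t|\xi|^2})$, which carries no decay in $j$. I would therefore first restore a frequency cutoff by the standard Littlewood--Paley reproducing trick before applying Young's inequality.

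First I would set $\tilde\phi_j := \phi_{j-1}+\phi_j+\phi_{j+1}$. Because $\supp\widehat{\phi}_j\subset\set{2^{j-1}\le|\xi|\le2^{j+1}}$ by \eqref{support} and \eqref{phij}, and because the partition of unity \eqref{partition2} shows that on this annulus only the indices $k\in\set{j-1,j,j+1}$ can contribute, one verifies that $\widehat{\tilde\phi}_j(\xi)=1$ for every $\xi\in\supp\widehat{\phi}_j$. Hence $\widehat{\tilde\phi}_j\,\widehat{\phi}_j=\widehat{\phi}_j$, which translates into the reproducing identity $\tilde\phi_j*(\phi_j*g)=\phi_j*g$.

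With this identity in hand I would write
\[
T_t(\phi_j*g)(s,\cdot)
= \cF^{-1}\big(e^{-t|\xi|^2}\,\widehat{\tilde\phi}_j\big) * (\phi_j*g)(s,\cdot),
\]
and estimate the $L^1$ norm of the kernel by splitting $\widehat{\tilde\phi}_j$ into its three pieces and invoking Lemma \ref{lemma1} on each:
\[
\Norm{\cF^{-1}\big(e^{-t|\xi|^2}\,\widehat{\tilde\phi}_j\big)}_{L^1}
\le \sum_{|k-j|\le1}\Norm{\cF^{-1}\big(\widehat{\phi}_k\,e^{-t|\xi|^2}\big)}_{L^1}
\lesssim \sum_{|k-j|\le1} e^{-ct2^{2k}}
\lesssim e^{-c't2^{2j}},
\]
where the last step uses $2^{2(j-1)}=\tfrac14\,2^{2j}$ to absorb the three terms into a single exponential with a possibly smaller constant $c'>0$. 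Applying Young's convolution inequality then yields
\[
\norm{T_t(\phi_j*g)(s,\cdot)}_{L^p}
\le \Norm{\cF^{-1}\big(e^{-t|\xi|^2}\,\widehat{\tilde\phi}_j\big)}_{L^1}\,
\norm{\phi_j*g(s,\cdot)}_{L^p}
\lesssim e^{-c't2^{2j}}\,\norm{\phi_j*g(s,\cdot)}_{L^p},
\]
which is the asserted bound.

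I do not expect any genuine obstacle here: once the reproducing identity is in place, the argument is the routine combination of Lemma \ref{lemma1} with Young's inequality. The only point that demands a little care is the verification that $\widehat{\tilde\phi}_j\equiv1$ on $\supp\widehat{\phi}_j$, so that the fattened cutoff may be legitimately inserted without changing $\phi_j*g$; this is where the precise support property \eqref{support} and the telescoping structure of \eqref{partition2} are used.
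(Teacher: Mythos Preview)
Your proof is correct and follows essentially the same approach as the paper: both arguments insert the fattened cutoff $\widehat{\phi}_{j-1}+\widehat{\phi}_j+\widehat{\phi}_{j+1}$ (using that it equals $1$ on $\supp\widehat{\phi}_j$), apply Lemma~\ref{lemma1} to each of the three resulting pieces, and conclude via Young's convolution inequality. The only cosmetic difference is that the paper distributes the three pieces before applying Young's inequality, whereas you first bound the $L^1$ norm of the combined kernel and then apply Young's inequality once.
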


\begin{proof}
We have
\[T_t(\phi_j*g)(s,x)=\cF^{-1}\big(e^{-t|\xi|^2} \widehat{\phi}_j(\xi) \widehat{g}(s,\xi)\big)(x).\]
From the support condition \eqref{support},
\[\widehat{\phi}_j(\xi)=\Big(\widehat{\phi}_{j-1}(\xi)+\widehat{\phi}_j(\xi)+\widehat{\phi}_{j+1}(\xi)\Big) \widehat{\phi}_j(\xi).\]
Thus we have
\[T_t(\phi_j*g)(s,x) = \sum_{-1\le k \le 1} \cF^{-1}(\widehat{\phi}_{j+k}(\xi)e^{-t|\xi|^2}) * \cF^{-1}(\widehat{\phi}_j(\xi)\widehat{g}(s,\xi))(x).\]
Young's convolution inequality gives
\[\norm{T_t(\phi_j*g)(s,\cdot)}_{L^p} \le \sum_{-1\le k \le 1} \norm{\cF^{-1}(\widehat{\phi}_{j+k}(\xi)e^{-t|\xi|^2})}_{L^1} \norm{\phi_j*g(s,\cdot)}_{L^p}\]
and therefore the result follows from Lemma \ref{lemma1}.
\end{proof}
\mysection{Proof of Proposition \ref{prop}}   \label{Section4}    
First we consider (\ref{no ep1}).
Using \eqref{partition1} we decompose
\begin{equation}\label{decom}
g(s,x)= \psi*g(s,x) + \sjoi\phi_j*g(s,x).
\end{equation}
Using \eqref{decom} and Minkowski's inequality, we have
\begin{align*}
&\bE\intT \int_0^t \norm{T_{t-s}g(s,\cdot)}_{L^p}^p dsdt \\
&\lesssim \bE\intT \int_0^t \norm{T_{t-s}(\psi*g)(s,\cdot)}_{L^p}^p dsdt \\
&\quad + \bE\intT \int_0^t
\Big(\sjoi\norm{T_{t-s}(\phi_j*g)(s,\cdot)}_{L^p}\Big)^p dsdt.
\end{align*}
By Young's inequality, the first term of right-hand side is  dominated by
\begin{align*}
& \bE\intT \int_0^t \| \Gam(t-s, \cdot)\|^p_{L^1}
\norm{ \psi*g (s,\cdot)}_{L^p}^p dsdt
= \bE\intT \int_0^t \norm{\psi*g(s,\cdot)}_{L^p}^p dsdt.
\end{align*}
Using lemma \ref{lemma2}, the second term of right-hand side is dominated by
\begin{align*}
 \bE\intT \int_0^t
\Big(\sjoi e^{-c2^{2j}(t-s)} \norm{\phi_j*g(s,\cdot)}_{L^p}\Big)^p dsdt.
\end{align*}
Hence, we have
\begin{align*}
&\bE\intT \int_0^t \norm{T_{t-s}g(s,\cdot)}_{L^p}^p dsdt \\
& \qquad \lesssim    \bE\intT \int_0^t \norm{\psi*g(s,\cdot)}_{L^p}^p dsdt +
\Big(\sjoi e^{-c2^{2j}(t-s)} \norm{\phi_j*g(s,\cdot)}_{L^p}\Big)^p dsdt.
\end{align*}
If we denote
\begin{equation}\label{notation}
f_j(t-s):=e^{-c2^{2j}(t-s)} \quad \text{and} \quad
g_j(s):=\norm{\phi_j*g(s,\cdot)}_{L^p},
\end{equation}
then to prove \eqref{no ep1}, it suffices to show that
\begin{equation}\label{goal2}
\intT \int_0^t \Big(\sum_{1 \leq j < \infty} f_j(t-s) g_j(s)\Big)^p ds dt
\lesssim \intT \sum_{1 \leq j < \infty} 2^{-2j} g_j(s)^p ds.
\end{equation}

Now we consider \eqref{no ep2}.
Using \eqref{partition2}, we decompose
\[g(s,x)=\sum_{j =-\infty}^\infty \phi_j*g(s,x).\]
Using similar calculation with the above estimation, we obtain
\begin{align*}
&\bE\intT \int_0^t \norm{T_{t-s}g(s,\cdot)}_{L^p}^p dsdt \\
&\le \bE\intT \int_0^t \Big(\sjz\norm{T_{t-s}(\phi_j*g)(s,\cdot)}_{L^p}\Big)^p dsdt \\
&\le \bE\intT \int_0^t \Big(\sjz e^{-c2^{2j}(t-s)}
\norm{\phi_j*g(s,\cdot)}_{L^p}\Big)^p dsdt.
\end{align*}
Hence, to prove \eqref{no ep2}, it suffices to show that
\begin{equation}\label{goal2-2}
\intT \int_0^t \Big(\sjz f_j(t-s) g_j(s)\Big)^p ds dt
\lesssim \intT \sjz 2^{-2j} g_j(s)^p ds.
\end{equation}
We shall prove the inequalities \eqref{goal2} and \eqref{goal2-2} in Section \ref{Section5}.
\section{Proof of Main Lemma} \label{Section5}
\begin{lemma}\label{lemma3}
For $0 < T< \infty$ and  $1<p<\infty$
\begin{align}\label{L-goal2}
\intT \int_0^t \Bigabs{\sjoi f_j(t-s) g_j(s)}^p ds dt
\lesssim \intT \sjoi 2^{-2j} |g_j(s)|^p ds
\end{align}
and
\begin{align}\label{L-goal3}
\intT \int_0^t \Bigabs{\sjz f_j(t-s) g_j(s)}^p ds dt
\lesssim \intT \sjz 2^{-2j} |g_j(s)|^p ds.
\end{align}
\end{lemma}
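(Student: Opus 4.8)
The plan is to first collapse the double time integral to a one–dimensional estimate and then prove that estimate by a Littlewood--Paley--style dyadic splitting of the time axis. Writing $r=t-s$ and using Fubini on the region $\set{0<s<t<T}$, the left-hand side of either \eqref{L-goal2} or \eqref{L-goal3} equals $\intT \int_0^{T-s} \Bigabs{\sum_j e^{-c2^{2j}r} g_j(s)}^p\,dr\,ds$, where the sum runs over $j\ge1$ in the first case and over all $j\in\bZ$ in the second. Bounding $\int_0^{T-s}$ by $\int_0^\infty$ (which, pleasantly, also makes the constant independent of $T$) and freezing $s$, it suffices to prove the purely one-dimensional inequality
\[
\int_0^\infty \Bigabs{\sum_j e^{-c2^{2j}r} a_j}^p dr \lesssim \sum_j 2^{-2j}|a_j|^p
\]
for an arbitrary sequence $(a_j)$; one then integrates the right-hand side in $s$ with $a_j=g_j(s)$. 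The scaling here is dictated by $\int_0^\infty e^{-c2^{2j}r}\,dr \sim 2^{-2j}$, which is exactly the weight appearing on the right, so Minkowski's inequality alone would be too lossy and the scale separation of the $e^{-c2^{2j}r}$ must be exploited.

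To prove the one-dimensional inequality I would decompose $(0,\infty)=\bigcup_{m\in\bZ} I_m$ with $I_m=[2^{-2(m+1)},2^{-2m})$, so that $|I_m|\sim 2^{-2m}$ and, for $r\in I_m$, $2^{2j}r\sim 2^{2(j-m)}$. On $I_m$ I split the sum at $j=m$: for $j\le m$ I use the crude bound $e^{-c2^{2j}r}\le1$, while for $j>m$ the estimate $r\ge\tfrac14 2^{-2m}$ gives the super-exponential decay $e^{-c2^{2j}r}\le e^{-\frac{c}{4}2^{2(j-m)}}$. Setting $A_m=\sum_{j\le m}|a_j|$ and $B_m=\sum_{j>m} w_{j-m}|a_j|$ with $w_k=e^{-\frac{c}{4}2^{2k}}$, and using the elementary bound $(A_m+B_m)^p\lesssim A_m^p+B_m^p$, this yields $\int_0^\infty|\cdots|^p\,dr \lesssim \sum_m 2^{-2m}(A_m^p+B_m^p)$, and it remains to control the two pieces against $\sum_j 2^{-2j}|a_j|^p$.

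The tail piece $B_m$ is the easy one: since $W:=\sum_{k\ge1}w_k<\infty$, Jensen's inequality applied to the probability weights $w_k/W$ gives $B_m^p\le W^{p-1}\sum_{k\ge1} w_k|a_{m+k}|^p$, and after the substitution $j=m+k$ the factor $\sum_{k\ge1} w_k 2^{2k}$ converges (the double-exponential decay of $w_k$ beats the $2^{2k}$ growth), leaving $\sum_m 2^{-2m}B_m^p \lesssim \sum_j 2^{-2j}|a_j|^p$. The genuine content, and the step I expect to be the main obstacle, is the Hardy piece $A_m$: with $\lam=2^{-2}<1$ I must show the discrete weighted Hardy inequality $\sum_m \lam^m\big(\sum_{j\le m}|a_j|\big)^p \lesssim \sum_j \lam^j|a_j|^p$. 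I would prove it by inserting the gauge $\lam^{js}$ and applying Hölder in $j$: choosing any $s$ with $0<s<1/p$ makes the auxiliary geometric series $\sum_{j\le m}\lam^{-jsp'}$ converge (needs $s>0$) and then makes $\sum_{m\ge j}\lam^{m(1-sp)}$ converge (needs $s<1/p$), and the two geometric sums telescope to reproduce the weight $\lam^j$. This is precisely where the hypothesis $p>1$ is used, through the finiteness of the conjugate exponent $p'$. The argument is identical for the one-sided sum $j\ge1$ and the two-sided sum $j\in\bZ$, since the relevant geometric series converge at both ends under the same choice of $s$, so \eqref{L-goal2} and \eqref{L-goal3} follow simultaneously.
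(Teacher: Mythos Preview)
Your argument is correct. It lands on the same core split as the paper---indices with $2^{2j}r\lesssim 1$ versus $2^{2j}r\gtrsim 1$---but the packaging differs. The paper keeps the double integral in $(t,s)$ and splits the inner sum directly into $J_1=\{2^{2j}(t-s)\le1\}$ and $J_2=\{2^{2j}(t-s)>1\}$; on $J_1$ it bounds $f_j\le c$ and applies H\"older in $j$ with weight $2^{j/(p-1)}$ to extract the factor $(t-s)^{-1/2}$, while on $J_2$ it uses only the polynomial decay $f_j(t-s)\lesssim (2^{2j}(t-s))^{-1}$ together with H\"older with weight $2^{rj/p}$ for some $2<r<2p$, producing $(t-s)^{-r/2}$; both pieces are then closed by exchanging the order of $t$, $s$, and $j$. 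Your version first Fubinis down to the one-variable inequality $\int_0^\infty\bigl|\sum_j e^{-c2^{2j}r}a_j\bigr|^p\,dr\lesssim\sum_j 2^{-2j}|a_j|^p$, then partitions $r$ dyadically into intervals $I_m$ and isolates a clean discrete weighted Hardy inequality for the low-frequency part. The two routes are equivalent in spirit---your $j\le m$ versus $j>m$ split on $I_m$ is exactly the paper's $J_1$/$J_2$ split---but your framing as a one-dimensional sequence inequality makes the role of the weighted Hardy inequality more transparent (and exploits the full super-exponential decay for the tail rather than just a polynomial bound), while the paper's direct estimate avoids the intermediate discretization of $r$.
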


\begin{proof}
We only prove \eqref{L-goal2} since the proof of \eqref{L-goal3} is almost the same.
In order to use the decay of the function $f_j(t)$,
we separate the indices of the summation as
\begin{equation}\label{lem3-4}
\begin{split}
&2^{1-p} \intT \int_0^t \Big(\sjoi f_j(t-s) g_j(s)\Big)^p ds dt \\
&\le \intT \int_0^t \Big(\sum_{2^{2j}(t-s)\le1} f_j(t-s) g_j(s)\Big)^p ds dt
+ \intT \int_0^t \Big(\sum_{2^{2j}(t-s)>1} f_j(t-s) g_j(s)\Big)^p ds dt\\
&:= J_1 + J_2.
\end{split}
\end{equation}
If $2^{2j}(t-s) \leq 1$, then   $f_j(t-s) \leq c$ for some
positive constant $c$ depending only on $d$ . Hence, using H\"older's inequality, we have
\[J_1 \le \intT \int_0^t \Big(\sum_{2^{2j}(t-s)\le1} 2^{j/(p-1)}\Big)^{p-1} \sum_{2^{2j}(t-s)\le1} 2^{-j} g_j(s)^p ds dt.\]
Summing a geometric series, we have
\[\Big(\sum_{2^{2j}(t-s)\le1} 2^{j/(p-1)} \Big)^{p-1} \lesssim (t-s)^{-1/2}.\]
Changing the order of integration and summation, we get
\begin{equation}\label{lem3-5}
\begin{split}
J_1& \lesssim \intT \int_0^t (t-s)^{-1/2} \sum_{2^{2j}(t-s)\le1} 2^{-j} g_j(s)^p ds dt \\
&= \intT \sjoi 2^{-j} g_j(s)^p \int_s^{s+2^{-2j}} (t-s)^{-1/2} dtds \\
&\lesssim \intT \sjoi 2^{-2j} g_j(s)^p ds.
\end{split}
\end{equation}

Now, we estimate $J_2$. Let us fix   $2<r<2p$. Using H\"older's inequality, we obtain
\begin{align*}
J_2 &= \intT \int_0^t \Big(\sum_{2^{2j}(t-s)>1} 2^{rj/p} f_j(t-s) 2^{-rj/p} g_j(s)\Big)^p ds dt \\
&\lesssim \intT \int_0^t \Big(\sum_{2^{2j}(t-s)>1} 2^{rj/(p-1)} f_j(t-s)^{p/(p-1)}\Big)^{p-1} \sum_{2^{2j}(t-s)>1} 2^{-rj} g_j(s)^p ds dt.
\end{align*}
Since $f_j(t-s) \lesssim 2^{-2j}(t-s)^{-1}  $ for  $2^{2j}(t-s)>1$,
summing a geometric series, we have
\begin{align*}
\Big(\sum_{2^{2j}(t-s)>1} 2^{rj/(p-1)} f_j(t-s)^{p/(p-1)}\Big)^{p-1}
&\lesssim \Big(\sum_{2^{2j}(t-s)>1} 2^{j(r-2 p)/(p-1)}\Big)^{p-1} (t-s)^{-p}\\
&\lesssim (t-s)^{-r/2}.
\end{align*}
By changing the order of integration and summation, we get
\begin{equation}\label{lem3-6}
\begin{split}
J_2& \lesssim\intT \int_0^t (t-s)^{-r/2} \sum_{2^{2j}(t-s)>1} 2^{-rj} g_j(s)^p ds dt \\
&\le \intT \sjoi 2^{-rj} g_j(s)^p \int_{s+2^{-2j}}^\infty (t-s)^{-r/2} dtds \\
&\lesssim \intT \sjoi 2^{-2j} g_j(s)^p ds.
\end{split}
\end{equation}
From \eqref{lem3-4}, \eqref{lem3-5} and \eqref{lem3-6}, we obtain \eqref{L-goal2}.
\end{proof}
\section{Proof of Theorem \ref{thm}}          \label{Section6}
Since the proofs are similar, we only prove the first inequality.
Let $u$ be a function defined in \eqref{solution}.
Since we have
\[(I-\De)^{k/2}u(t,x)
= \int_0^t \ip{\Gam_{t-s},(I-\De)^{\frac{k}2}g(s,\cdot)} dX_s
= \int_0^t T_{t-s}( (I - \De)^{\frac{k}2} g )(s,x) dX_s,\]
it is sufficient to prove the case $k=0$, that is,
\begin{align}\label{thm0}
\norm{u}_{\bL^p(\RT)}
=\left(\bE\int_0^t \norm{u(s,\cdot)}_{L^p(\Rd)}^pds \right)^{1/p}
\lesssim \norm{g}_{\bB_p^{-\frac2p}(\RT)}.
\end{align}
Actually, if this estimate is proved, then, by (1) of  remark \ref{rem}, we obtain
\[\norm{(I-\De)^{k/2}u}_{\bL^p(\RT)} \lesssim \norm{(I-\De)^{k/2}g}_{\bB_p^{-\frac2p}(\RT)}\]
and  hence we have
\[\norm{u}_{\bH_p^k(\RT)} \lesssim \norm{g}_{\bB_p^{k-\frac2p}(\RT)}.\]
Using Kunita's inequality
(see pp. 332-335 in \cite{Ku}, corollary 4.4.24 in \cite{App}), we have
\begin{equation}\label{Kunita}
\begin{split}
\norm{u}_{\bL^p(\RT)}^p
&=\bE \intT \int_{\bR^d} \Abs{\int_0^t T_{t-s} g(s,\cdot)(x)dX_s}^pdxdt \\
&\lesssim \bE \intT \intRd \int_0^t \intRd |T_{t-s}g(s,x)|^p|z|^p \nu(dz) dsdxdt \\
&\quad+ \bE \intT \intRd \Big(\int_0^t \intRd |T_{t-s}g(s,x)|^2|z|^2 \nu(dz)ds\Big)^{p/2} dxdt\\
&= \beta_p \bE \intT \intRd \int_0^t  |T_{t-s}g(s,x)|^p dsdxdt \\
&\quad+ \beta_2^{\frac{p}2} \bE \intT \intRd \Big(\int_0^t |T_{t-s}g(s,x)|^2 ds\Big)^{p/2} dxdt.
\end{split}
\end{equation}
From \eqref{no ep1} and \eqref{Kunita}, we get \eqref{thm0}.
\mysection{SPDE with Fractional Laplace operator}  \label{Section7}
In this section, we give an application to the SPDE with fractional Laplace operator.
\begin{equation}\label{fspde1}
\begin{cases}
du  = -(-\De)^\alpha u dt + g dX_t & \RT \vspace{2mm}\\
u|_{t=0} = u_0 & \Rd,
\end{cases}
\end{equation}
where $(-\De)^\alpha u$, $0<\alpha<1$, is the fractional Laplacian of $u$ defined by
\begin{align}\label{definition2}
(-\De)^\alpha u(x) := c(d,\alpha) \int_{\bR^d} \frac{ u(x+y) - 2 u(x) + u(x-y)}{|y|^{n+2\alpha}} dy
\end{align}
with $c(d,\alpha)$ is a normalization constant. The fractional
Laplacian of $u$ also can be defined as a pseudo-differential
operator
\begin{equation}\label{definition3}
(-\De)^\alpha u(x) = \cF^{-1}((2\pi|\xi|)^{2\alpha}\widehat{u}(\xi))(x).
\end{equation}
The solution $u$ of \eqref{fspde1} is represented by
\[u(t,x) = \int_0^t P_{t-s} g(s,x) dX_s,\]
where $P_tg(s,x) = p(t, \cdot) * g(s, \cdot)(x)$ with
fundamental solution $p(t,x)$ of the fractional Laplace equation which is given by
\[p(t,x) = \cF^{-1}(e^{-t|\xi|^{2\alpha}})(x).\]
By a slight modification of the proof of Proposition \ref{prop},
one can prove the following estimate
\begin{proposition}\label{prop2}
Let $0<T<\infty$ and $2\le p<\infty$. There is a positive constant
$c$ such that
\[\bE \intT \intRd \int_0^t  |P_{t-s}g(s,x)|^p dsdxdt
\le  c\bE \int_0^T \|g(t,\cdot)\|^p_{B_p^{-\frac{2\alpha}p}(\bR^d)} dt.\]
\end{proposition}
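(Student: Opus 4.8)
The plan is to repeat verbatim the three-step scheme behind Proposition \ref{prop} — a Littlewood--Paley decomposition, a reduction to a weighted summation inequality, and the analysis of that inequality as in Lemma \ref{lemma3} — while tracking how the symbol $e^{-t|\xi|^2}$ of $T_t$ is replaced by the symbol $e^{-t|\xi|^{2\alpha}}$ of $P_t$. Concretely, the exponent $2^{2j}$ in every kernel bound should become $2^{2\alpha j}$, and the weight $2^{-2j}$ in the target inequality should become $2^{-2\alpha j}$, which is exactly the weight that produces the $B_p^{-2\alpha/p}$ norm.

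First I would establish the fractional analogues of Lemmas \ref{lemma1} and \ref{lemma2}. Setting $K_j^\alpha(t,x) = \cF^{-1}(\widehat\phi_j(\xi)e^{-t|\xi|^{2\alpha}})(x)$, the same scaling computation gives $K_j^\alpha(t,x) = 2^{jd}K_0^\alpha(2^{2\alpha j}t, 2^j x)$, so it suffices to bound $\norm{K_0^\alpha(t,\cdot)}_{L^1}$. Since $\widehat\phi$ is supported in the annulus $\set{1/2\le|\xi|\le2}$, the symbol $|\xi|^{2\alpha}$ is smooth there (the singularity at the origin is avoided), so the repeated integration-by-parts argument of Lemma \ref{lemma1} goes through, the factors of $t$ produced by differentiating $e^{-t|\xi|^{2\alpha}}$ being absorbed into the exponential. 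This yields $\norm{K_j^\alpha(t,\cdot)}_{L^1}\lesssim e^{-ct2^{2\alpha j}}$ and hence, exactly as in Lemma \ref{lemma2},
\[
\norm{P_t(\phi_j*g)(s,\cdot)}_{L^p}\lesssim e^{-c2^{2\alpha j}t}\norm{\phi_j*g(s,\cdot)}_{L^p}.
\]
Writing $f_j(t-s):=e^{-c2^{2\alpha j}(t-s)}$ and $g_j(s):=\norm{\phi_j*g(s,\cdot)}_{L^p}$, decomposing $g$ via \eqref{partition1} and repeating the estimates of Section \ref{Section4} then reduces the proposition to the weighted inequality
\[
\intT \int_0^t \Big(\sjoi f_j(t-s) g_j(s)\Big)^p ds\, dt
\lesssim \intT \sjoi 2^{-2\alpha j} g_j(s)^p\, ds .
\]

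The substance of the argument is this last inequality, which I would prove by adapting Lemma \ref{lemma3}, splitting the sum at $2^{2\alpha j}(t-s)=1$ into $J_1$ (where $f_j\lesssim1$) and $J_2$ (where $f_j\lesssim 2^{-2\alpha j}(t-s)^{-1}$). The one genuine obstacle is that the rigid H\"older exponents of Lemma \ref{lemma3} are tailored to $\alpha=1$ and make the resulting time integrals diverge for small $\alpha$: in $J_1$ one meets $\int_0^{2^{-2\alpha j}}\tau^{-1/(2\alpha)}\,d\tau$, which converges only for $\alpha>1/2$. The remedy is to keep the H\"older split flexible. In $J_1$ I would write $\sjoi g_j = \sjoi 2^{aj}(2^{-aj}g_j)$ and apply H\"older, so that the geometric sum over $\set{2^j\le(t-s)^{-1/(2\alpha)}}$ contributes $(t-s)^{-ap/(2\alpha)}$ and the time integral $\int_0^{2^{-2\alpha j}}\tau^{-ap/(2\alpha)}\,d\tau$ converges provided $ap/(2\alpha)<1$; any $a\in(0,2\alpha/p)$ works and restores the weight $2^{-2\alpha j}$. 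In $J_2$ I would insert the weight $2^{\pm rj/p}$ as in Lemma \ref{lemma3}, now with $r$ in the range $2\alpha<r<2\alpha p$: the bound $r<2\alpha p$ makes the geometric sum over $\set{2^j>(t-s)^{-1/(2\alpha)}}$ converge and produces $(t-s)^{-r/(2\alpha)}$ after combining with $f_j^{p/(p-1)}\lesssim 2^{-2\alpha jp/(p-1)}(t-s)^{-p/(p-1)}$, while the bound $r>2\alpha$ makes $\int_{2^{-2\alpha j}}^\infty \tau^{-r/(2\alpha)}\,dt$ converge and again returns the weight $2^{-2\alpha j}$. Both parameter intervals $(0,2\alpha/p)$ and $(2\alpha,2\alpha p)$ are nonempty for every $\alpha\in(0,1)$ and $p>1$, and reduce to the choices of Lemma \ref{lemma3} when $\alpha=1$, so the inequality — and hence Proposition \ref{prop2} — follows.
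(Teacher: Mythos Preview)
Your proposal is correct and follows precisely the route the paper indicates: the paper itself gives no detailed proof of Proposition~\ref{prop2}, merely stating that it follows ``by a slight modification of the proof of Proposition~\ref{prop}'' and recording the fractional analogue $\norm{P_t(\phi_j*g)(s,\cdot)}_{L^p}\lesssim e^{-c2^{2\alpha j}t}\norm{\phi_j*g(s,\cdot)}_{L^p}$ of Lemma~\ref{lemma2}. You have faithfully carried this out, and in fact you have been more careful than the paper: your observation that the fixed H\"older split of Lemma~\ref{lemma3} (effectively $a=1/p$ in your notation) forces the divergent integral $\int_0^{2^{-2\alpha j}}\tau^{-1/(2\alpha)}\,d\tau$ when $\alpha\le 1/2$ is correct, and your remedy --- taking $a\in(0,2\alpha/p)$ for $J_1$ and $r\in(2\alpha,2\alpha p)$ for $J_2$ --- is exactly the right adjustment, reducing to the paper's choices when $\alpha=1$.
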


A direct consequence is the following theorem.
\begin{theorem}\label{thm2}
For $2 \leq p < \infty$,
\begin{align*}
\| u\|_{{\mathbb H}_p^k(\bR^d_T)}
\leq c(p,T) \| g\|_{{\mathbb B}_p^{k- \frac{2\alpha}p}(\bR^d_T)}.
\end{align*}
\end{theorem}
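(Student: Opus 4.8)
The plan is to mirror the proof of Theorem~\ref{thm} in Section~\ref{Section6}, with the heat semigroup $T_{t-s}$ replaced by the fractional semigroup $P_{t-s}$ and the estimate \eqref{no ep1} replaced by Proposition~\ref{prop2}. First I would reduce to the case $k=0$. Since $(I-\De)^{k/2}$ and $P_{t-s}$ are both Fourier multipliers in the space variable, they commute, so that
\[(I-\De)^{k/2}u(t,x) = \int_0^t P_{t-s}\big((I-\De)^{k/2}g\big)(s,x)\,dX_s,\]
exactly as in the $k=0$ reduction of Section~\ref{Section6} (interpreting $P_{t-s}g$ through the duality pairing with $p(t-s,\cdot)$ when $g$ is a distribution). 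By Remark~\ref{rem}(1) the operator $(I-\De)^{k/2}$ is an isomorphism from $\bH_p^k(\RT)$ onto $\bL^p(\RT)$ and from $\bB_p^{k-2\alpha/p}(\RT)$ onto $\bB_p^{-2\alpha/p}(\RT)$, so it suffices to prove $\norm{u}_{\bL^p(\RT)} \lesssim \norm{g}_{\bB_p^{-\frac{2\alpha}p}(\RT)}$.

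Next I would apply Kunita's inequality exactly as in \eqref{Kunita} (this is where $2\le p$ and $\beta_2,\beta_p<\infty$ enter), splitting $\norm{u}_{\bL^p(\RT)}^p$ into the two terms
\[\beta_p\,\bE \intT \intRd \int_0^t |P_{t-s}g(s,x)|^p\,ds\,dx\,dt \quad\text{and}\quad \beta_2^{p/2}\,\bE \intT \intRd \Big(\int_0^t |P_{t-s}g(s,x)|^2\,ds\Big)^{p/2} dx\,dt.\]
The first term is controlled directly by Proposition~\ref{prop2}, giving the bound $\lesssim \bE\int_0^T \norm{g(s,\cdot)}_{B_p^{-2\alpha/p}(\Rd)}^p\,ds$, which is precisely the right-hand side we want.

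The main obstacle is the second (Burkholder--Davis--Gundy) term, which is not covered by Proposition~\ref{prop2}; it is the fractional analogue of Krylov's square-function estimate \eqref{Krylov}--\eqref{0521-1}. A Littlewood--Paley computation, using the fractional version of Lemma~\ref{lemma1} (namely $\norm{\cF^{-1}(\widehat\phi_j e^{-t|\xi|^{2\alpha}})}_{L^1} \lesssim e^{-ct2^{2\alpha j}}$, proved by the same scaling and integration-by-parts argument), shows that the inner time integral of a frequency-$2^j$ piece contributes a factor $2^{-2\alpha j}$, so one expects
\[\bE \intT \intRd \Big(\int_0^t |P_{t-s}g(s,x)|^2\,ds\Big)^{p/2} dx\,dt \lesssim \bE\int_0^T \norm{g(s,\cdot)}_{H_p^{-\alpha}(\Rd)}^p\,ds.\]
This gain of $\alpha$ derivatives is the genuinely analytic step; I would obtain it either by invoking the known $L^p$-theory for the fractional heat equation driven by Brownian motion or by adapting Krylov's $L^2$-plus-BMO interpolation argument to the kernel $p(t,x)$. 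Finally, since $-\frac{2\alpha}p \ge -\alpha$ for $p\ge2$, the Besov--Sobolev embedding $B_p^{-2\alpha/p}(\Rd)\hookrightarrow H_p^{-\alpha}(\Rd)$ (an equality when $p=2$, and a consequence of a strict gain in smoothness when $p>2$) turns this into the bound $\lesssim \norm{g}_{\bB_p^{-2\alpha/p}(\RT)}^p$, which together with the first term completes the proof.
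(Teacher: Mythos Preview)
Your proposal is correct and follows essentially the same route as the paper: reduce to $k=0$ via $(I-\De)^{k/2}$, apply Kunita's inequality to split $\norm{u}_{\bL^p(\RT)}^p$ into the $\beta_p$- and $\beta_2$-terms, bound the first by Proposition~\ref{prop2}, and handle the square-function term by the fractional Littlewood--Paley/Krylov-type estimate followed by an embedding into $B_p^{-2\alpha/p}$. The only difference is that for the second term the paper simply invokes the result of Kim--Kim~\cite{KK} (stated there as an $H_p^{-1}$ bound, which is even weaker than the $H_p^{-\alpha}$ bound you anticipate and so embeds a fortiori), whereas you sketch how one would derive it; this is a matter of citation rather than a different argument.
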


\begin{proof}
We sketch the proof of Theorem \ref{thm2}.
From the same reasoning in the proof of Theorem \ref{thm}, we may assume that $k=0$.
Using the Kunita's inequality, we have for some $c>0$
\begin{align*}
\bE \int_0^T \norm{u(s, \cdot)}_{L^p(\bR^d)}^p ds \leq
&  c \beta_p\bE \intT \intRd \int_0^t  |P_{t-s}g(s,x)|^p dsdxdt \\
&\quad+c \beta_2^{\frac{p}2} \bE \intT \intRd \Big(\int_0^t |P_{t-s}g(s,x)|^2 ds\Big)^{p/2} dxdt.
\end{align*}
H. Kim and I. Kim\cite{KK} showed that for $2\le p<\infty$
\[\bE \intT \intRd \Big(\int_0^t |P_{t-s}g(s,x)|^2 ds\Big)^{p/2} dxdt \le c
\bE \int_0^T \| g(t, \cdot)\|^p_{H_p^{-1}(\bR^d)} dt\]
for some $c>0$ (see also \cite{CL}).
By the same proof as in Lemma \ref{lemma1} and Lemma \ref{lemma2}, one can obtain
\[\norm{P_t(\phi_j*g)(s,\cdot)}_{L^p}
\lesssim e^{-c2^{2j\alpha}t} \norm{\phi_j*g(s,\cdot)}_{L^p}.\]
Similar to the proof of Theorem \ref{thm}, we can obtain the result.
\end{proof}

\end{document}